\numberwithin{equation}{section}
\newtheorem{theorem}{Theorem}[section]
\newtheorem{proposition}[theorem]{Proposition}
\newtheorem{lemma}[theorem]{Lemma}
\newtheorem{corollary}[theorem]{Corollary}
\theoremstyle{definition}
\newtheorem{definition}[theorem]{Definition}
\newtheorem{remark}[theorem]{Remark}
\newtheorem{example}[theorem]{Example}
\begin{document}

\baselineskip=15pt

\title[Lie algebroid connection and Harder-Narasimhan reduction]{Lie algebroid connection
and Harder-Narasimhan reduction}

\author[A. Bansal]{Ashima Bansal}

\address{Department of Mathematics, Shiv Nadar University, NH91, Tehsil Dadri,
Greater Noida, Uttar Pradesh 201314, India}

\email{ashima.bansal@snu.edu.in}

\author[I. Biswas]{Indranil Biswas}

\address{Department of Mathematics, Shiv Nadar University, NH91, Tehsil Dadri,
Greater Noida, Uttar Pradesh 201314, India}

\email{indranil.biswas@snu.edu.in, indranil29@gmail.com}

\author[P. Kumar]{Pradip Kumar}

\address{Department of Mathematics, Shiv Nadar University, NH91, Tehsil Dadri,
Greater Noida, Uttar Pradesh 201314, India}

\email{Pradip.Kumar@snu.edu.in}

\subjclass[2010]{14H60, 53D17, 53B15, 32C38}

\keywords{Lie algebroid, holomorphic connection, Atiyah bundle, parabolic subgroup,
Harder-Narasimhan reduction}

\date{}

\begin{abstract}
Take a holomorphic Lie algebroid $(V,\, \phi)$ on a compact connected Riemann surface $X$
such that the anchor map $\phi$ is not surjective. Let $P$ be a parabolic subgroup of a
complex reductive affine algebraic group $G$ and $E_P\, \subset\, E_G$ a holomorphic reduction
of structure group, to $P$, of a holomorphic principal $G$--bundle $E_G$ on $X$. We prove that
$E_P$ admits a holomorphic Lie algebroid connection for $(V,\,\phi)$ if the reduction $E_P$ is
infinitesimally rigid. If $E_P$ is the Harder--Narasimhan reduction of $E_G$, then it is shown that
$E_P$ admits a holomorphic Lie algebroid connection for $(V,\,\phi)$. In particular, for any
point $x_0\,\in\, X$, the Harder--Narasimhan reduction $E_P$ admits a logarithmic connection that
is nonsingular on the complement $X\setminus\{x_0\}$.
\end{abstract} 

\maketitle

\section{Introduction}

An indecomposable holomorphic vector bundle $E$ on a compact connected Riemann surface $X$ admits a
holomorphic connection if and only if the degree of $E$ is zero; more generally,
a holomorphic vector bundle $E$ on $X$ admits a holomorphic connection 
if and only if the degree of each indecomposable
component of $E$ is zero \cite{At}. This criterion for holomorphic connections generalizes 
to holomorphic principal $G$--bundles on $X$, where $G$ is a complex reductive affine algebraic group
\cite{AB}.

Lie algebroid connections are generalizations of holomorphic --- and also logarithmic --- connections.
A holomorphic Lie algebroid on $X$ is a pair $(V,\,\phi)$, where $V$ is a holomorphic vector bundle on $X$
equipped with the structure of a $\mathbb{C}$--bilinear Lie algebra on its sheaf of holomorphic sections, and
$\phi\,:\, V\, \longrightarrow\, TX$ is an ${\mathcal O}_X$--linear homomorphism satisfying the Leibniz rule
$$
[s,\, f t] \ =\ f\,[s,\, t]\;+\;\phi(s)(f)\,t
$$
for all locally defined holomorphic sections $s,\,t$ of $V$ and all locally defined holomorphic
functions $f$ on $X$. Lie algebroid connections are defined by replacing the tangent bundle $TX$ by $V$.
The map $\phi$ is used in formulating the Leibniz rule for connections. See \cite{BMRT}, \cite{BR},
\cite{To1}, \cite{To2} and \cite{AO} for Lie algebroids and Lie algebroid connections.

Here we address the following question. Let $E$ be a holomorphic vector bundle on $X$
of rank $r$ equipped with a filtration of holomorphic subbundles
$$
E_1\ \subset\ E_2\ \subset\ \cdots\ \subset\ E_{n-1}\ \subset\ E_n\ =\ E.
$$
Is there a holomorphic Lie algebroid connection $E$ that preserves every $E_i$, $1\, \leq\, i\, \leq\, n$?
This question is cast in the more general framework of holomorphic principal $G$--bundles $E_G$ equipped
with a holomorphic reduction of structure group $E_P\, \subset\, G$, where $G$ is a complex
reductive affine algebraic group and $P\, \subset\, G$ is a parabolic subgroup. When $G\,=\, \text{GL}(r,
{\mathbb C})$, the pair $E_P\, \subset\, E_G$ corresponds to the above data of a holomorphic vector bundle
$E$ of rank $r$ equipped with a filtration of holomorphic subbundles.

Take a holomorphic reduction of structure group $E_P\, \subset\, E_G$. The tangent space --- at this reduction
$E_P$ --- to the space of all holomorphic reductions of structure group of $E_G$ to $P$ is given by
$H^0(X,\, \text{ad}(E_G)/\text{ad}(E_P))$, where $\text{ad}(E_G)$ and $\text{ad}(E_P)$ are the adjoint vector
bundles for the principal bundles $E_G$ and $E_P$ respectively. The holomorphic reduction of structure group
$E_P\, \subset\, E_G$ is called infinitesimally rigid if
$$
H^0(X,\, \text{ad}(E_G)/\text{ad}(E_P))\ =\ 0
$$
(see Definition \ref{def2}). An example of infinitesimally rigid reduction of structure group
of $E_G$ is given by the Harder--Narasimhan reduction of $E_G$ (see Lemma \ref{lem1}).

We prove the following (see Theorem \ref{thm1}):

\begin{theorem}\label{thm0}
Let $(V,\, \phi)$ be a holomorphic Lie algebroid over $X$ satisfying the condition that the
anchor map $\phi$ is not surjective. Let $E_P\, \subset\, E_G$ be a holomorphic reduction of
structure group which is infinitesimally rigid. Then the holomorphic
principal $P$--bundle $E_P$ admits a holomorphic Lie algebroid connection for $(V,\,\phi)$.
\end{theorem}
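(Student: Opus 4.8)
The plan is to recast the existence of a Lie algebroid connection as a lifting problem for the Atiyah bundle, to produce such a lift first on $E_G$ using non-surjectivity of $\phi$, and then to push it down to the reduction $E_P$ using the adjoint short exact sequence together with infinitesimal rigidity.

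First I would recall that a holomorphic Lie algebroid connection on $E_P$ for $(V,\phi)$ is the same as a holomorphic homomorphism $D\colon V\to\text{At}(E_P)$ with $\rho_P\circ D=\phi$, where $0\to\text{ad}(E_P)\to\text{At}(E_P)\xrightarrow{\rho_P}TX\to0$ is the Atiyah sequence of $E_P$; equivalently it is a splitting of the pullback of this sequence along $\phi$. The obstruction to its existence is therefore the class $\phi^*\text{at}(E_P)\in H^1(X,\mathcal{H}om(V,\text{ad}(E_P)))$, where $\text{at}(E_P)$ is the Atiyah class of $E_P$, and the entire problem is to prove that this class vanishes.

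Next I would use that $\phi$ is not surjective. As $TX$ is a line bundle, the image $T':=\phi(V)$ is a line subbundle of the form $TX(-D)$ with $D$ an effective divisor, and $D>0$ exactly because $\phi$ is not surjective (if $\phi=0$ the zero map into $\text{ad}(E_P)\subset\text{At}(E_P)$ is already a connection). Factoring $\phi=\iota\circ\overline\phi$ with $\overline\phi\colon V\twoheadrightarrow T'$ and $\iota\colon T'\hookrightarrow TX$, it suffices to lift $\iota$, so the obstruction becomes $\iota^*\text{at}(E_P)\in H^1(X,\text{ad}(E_P)\otimes\Omega^1_X(D))$. The companion obstruction $\iota^*\text{at}(E_G)\in H^1(X,\text{ad}(E_G)\otimes\Omega^1_X(D))$ is precisely the obstruction to a logarithmic connection on $E_G$ with poles on $D$; since $G$ is reductive and $D\neq0$, such a connection $D_G$ exists, so $\iota^*\text{at}(E_G)=0$. (On a Riemann surface every such connection is automatically flat, a feature I would keep in reserve.)

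Finally I would pass from $E_G$ to $E_P$ through the adjoint sequence $0\to\text{ad}(E_P)\to\text{ad}(E_G)\to W\to0$ with $W:=\text{ad}(E_G)/\text{ad}(E_P)$. Tensoring by $\Omega^1_X(D)$ and using naturality of Atiyah classes, $\iota^*\text{at}(E_P)$ maps to $\iota^*\text{at}(E_G)=0$, so it lies in the image of the connecting homomorphism $\delta\colon H^0(X,W\otimes\Omega^1_X(D))\to H^1(X,\text{ad}(E_P)\otimes\Omega^1_X(D))$. The \emph{main obstacle} is to upgrade ``lies in the image of $\delta$'' to ``equals zero'', and this is exactly where infinitesimal rigidity $H^0(X,W)=0$ --- equivalently $H^1(X,W^\vee\otimes K_X)=0$ by Serre duality --- must enter. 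The route I would pursue is to dualize, using $\text{ad}(E_G)^\vee\cong\text{ad}(E_G)$ and the sequence $0\to W^\vee\to\text{ad}(E_G)\to\text{ad}(E_P)^\vee\to0$: by Serre duality $\iota^*\text{at}(E_P)=0$ amounts to its trivial pairing against $H^0(X,\text{ad}(E_P)^\vee(-D))$, each section of which I would lift to $H^0(X,\text{ad}(E_G)(-D))$ modulo an obstruction in $H^1(X,W^\vee(-D))$, whereupon the pairing collapses to the vanishing pairing against $\iota^*\text{at}(E_G)$. Equivalently, and perhaps more transparently, one kills the second fundamental form $T'\to W$ of $D_G$ (the composite of $D_G$ with the projection $\text{At}(E_G)\to W$) by a suitable section of $\mathcal{H}om(T',\text{ad}(E_G))$, infinitesimal rigidity being the input that makes this correction available. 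Reconciling the twist by $\mathcal{O}_X(D)$ with the untwisted rigidity statement is the delicate point on which the proof turns.
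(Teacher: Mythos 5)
Your setup is sound and, up to the point where you fix a connection ``upstairs'', runs parallel to the paper: the reformulation as a lifting problem against the Atiyah sequence is the paper's Lemma~\ref{lem-1}, the factorization of $\phi$ through $\mathcal{T}:=\phi(V)=TX\otimes\mathcal{O}_X(-\mathbb{D})$ with $\mathbb{D}$ a nonzero effective divisor is the paper's first reduction, and the existence of a Lie algebroid (here logarithmic) connection on a bundle with reductive structure group when the anchor is not surjective is exactly the external input the paper takes from \cite{Bi}. The divergence --- and the gap --- lies in where you apply that input and how you descend. You place the connection $D_G$ on $E_G$ and then try to correct its second fundamental form $S(D_G)\in H^0(X,\,W\otimes\mathcal{T}^*)$, where $W=\mathrm{ad}(E_G)/\mathrm{ad}(E_P)$, so that the corrected connection preserves $E_P$. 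But $\mathcal{T}^*=K_X\otimes\mathcal{O}_X(\mathbb{D})$ is a \emph{positive} twist, so $H^0(X,\,W\otimes K_X\otimes\mathcal{O}_X(\mathbb{D}))$ is in general nonzero even when $H^0(X,W)=0$; and the assertion that $S(D_G)$ lies in the image of $H^0(X,\,\mathrm{ad}(E_G)\otimes\mathcal{T}^*)$ is \emph{equivalent} to the vanishing of the class $\iota^*\mathrm{at}(E_P)$ you are trying to prove, so no progress is made. Your dualization variant hits the same wall: the obstruction to lifting a section of $\mathrm{ad}(E_P)^{\vee}\otimes\mathcal{O}_X(-\mathbb{D})$ to $\mathrm{ad}(E_G)\otimes\mathcal{O}_X(-\mathbb{D})$ lives in $H^1(X,\,\mathcal{R}(E_P)\otimes\mathcal{O}_X(-\mathbb{D}))$, whose Serre dual is again $H^0(X,\,W\otimes K_X\otimes\mathcal{O}_X(\mathbb{D}))$. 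The ``delicate point'' you flag at the end is therefore a genuine unresolved gap, not a technicality.

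The fix is to go down rather than up: apply the reductive-group existence theorem not to $E_G$ but to the Levi quotient bundle $E_{L(P)}=E_P/R_u(P)$, and then \emph{lift} the resulting connection $\delta\colon\mathcal{T}\to\mathcal{A}(E_{L(P)})$ to $E_P$. The preimage $\gamma^{-1}(\delta(\mathcal{T}))\subset\mathcal{A}(E_P)$ is an extension of $\mathcal{T}$ by the nilpotent radical bundle $\mathcal{R}(E_P)$, so the residual obstruction is an extension class in $H^1(X,\,\mathcal{R}(E_P)\otimes\mathcal{T}^*)$. Its Serre dual is $H^0(X,\,\mathcal{R}(E_P)^*\otimes\mathcal{T}\otimes K_X)=H^0(X,\,(\mathrm{ad}(E_G)/\mathrm{ad}(E_P))\otimes\mathcal{O}_X(-\mathbb{D}))\subset H^0(X,\,\mathrm{ad}(E_G)/\mathrm{ad}(E_P))=0$, by effectivity of $\mathbb{D}$ and infinitesimal rigidity. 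The point is that Serre duality converts $\mathcal{T}^*$ into $\mathcal{T}\otimes K_X=\mathcal{O}_X(-\mathbb{D})$, a \emph{negative} twist of $W\cong\mathcal{R}(E_P)^*$, which is precisely what rigidity controls; in your version the twist comes out with the wrong sign.
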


Using Theorem \ref{thm0} the following is proved (see Corollary \ref{cor2}):

\begin{corollary}\label{cor00}
Take a point $x_0\, \in\, X$. Let $E_P\, \subset\, E_G$ be the Harder--Narasimhan reduction of $E_G$.
Then the holomorphic principal $P$--bundle $E_P$ admits a logarithmic connection which is nonsingular
over $X\setminus \{x_0\}$ (singularity allowed only over $x_0$).
\end{corollary}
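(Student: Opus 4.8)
The plan is to exhibit logarithmic connections on $E_P$ that are nonsingular away from $x_0$ as Lie algebroid connections for one specific Lie algebroid, and then to quote Theorem \ref{thm0}. The relevant algebroid is the logarithmic tangent sheaf: set $V \,=\, TX(-\log x_0)$, the subsheaf of $TX$ consisting of holomorphic vector fields vanishing at $x_0$, with anchor $\phi\colon V\,\hookrightarrow\, TX$ the tautological inclusion. In a local holomorphic coordinate $z$ centred at $x_0$, the sheaf $V$ is freely generated by $z\,\partial/\partial z$, so $V\,\cong\, TX\otimes{\mathcal O}_X(-x_0)$ as a line bundle. First I would check that $(V,\,\phi)$ is a holomorphic Lie algebroid: the bracket is the restriction of the Lie bracket of vector fields, which preserves $V$ because the bracket of two vector fields vanishing at $x_0$ again vanishes there, and the Leibniz rule holds for this $\phi$ by construction. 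The anchor $\phi$ is not surjective --- it is an isomorphism over $X\setminus\{x_0\}$, but its fibre at $x_0$ is the zero map, so its cokernel is the skyscraper sheaf ${\mathcal O}_{x_0}$.

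With $(V,\,\phi)$ so chosen, the hypotheses of Theorem \ref{thm0} are met: the anchor $\phi$ is not surjective, and the Harder--Narasimhan reduction $E_P\,\subset\, E_G$ is infinitesimally rigid by Lemma \ref{lem1}. Hence Theorem \ref{thm0} produces a holomorphic Lie algebroid connection on $E_P$ for $(V,\,\phi)$.

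It then remains to translate this into a logarithmic connection. A holomorphic $(V,\,\phi)$--connection on $E_P$ is, by definition, a holomorphic splitting of the pullback along $\phi$ of the Atiyah exact sequence $0\,\to\,\text{ad}(E_P)\,\to\,\text{At}(E_P)\,\to\, TX\,\to\, 0$. For the present $V\,=\,TX(-\log x_0)$ this pulled-back sequence is exactly the logarithmic Atiyah sequence of $E_P$ with pole at $x_0$, so a splitting of it is precisely a logarithmic connection on $E_P$ whose polar divisor is contained in $\{x_0\}$; equivalently, the connection takes values in $\text{ad}(E_P)\otimes\Omega^1_X(\log x_0)$. Since $\Omega^1_X(\log x_0)$ coincides with $\Omega^1_X$ over $X\setminus\{x_0\}$, such a connection is an ordinary holomorphic, hence nonsingular, connection there, with a singularity permitted only at $x_0$. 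This is the assertion of the corollary.

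The only point that needs care --- and where I expect any genuine work to sit --- is the dictionary in the last step: one must confirm that the $\phi$--pullback of the Atiyah sequence for $V\,=\,TX(-\log x_0)$ is literally the logarithmic Atiyah algebroid, so that the operator furnished by Theorem \ref{thm0} is genuinely logarithmic (with at worst a simple pole at $x_0$) rather than more singular. This is guaranteed because, by the very definition of a $V$--connection, the connection values land in $\text{ad}(E_P)\otimes\Omega^1_X(\log x_0)$.
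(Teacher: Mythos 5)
Your proposal is correct and follows exactly the paper's route: the paper proves this by applying Corollary \ref{cor1} (i.e.\ Lemma \ref{lem1} plus Theorem \ref{thm1}) to the Lie algebroid $V\,=\,TX\otimes{\mathcal O}_X(-x_0)$ with anchor the natural inclusion into $TX$. You simply spell out in more detail the verification that this pair is a Lie algebroid with non-surjective anchor and that a $(V,\phi)$--connection for it is the same thing as a logarithmic connection with pole only at $x_0$, which the paper leaves implicit.
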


We note that the Harder--Narasimhan reduction $E_P$ does not admit any holomorphic
connection (see Remark \ref{rem2}).

Theorem \ref{thm0} also gives the following (see Corollary \ref{cor3}):

\begin{corollary}\label{cor0}
Let $E$ be a holomorphic vector bundle on $X$, and let
$$
E_1\ \subset\ E_2\ \subset\ \cdots\ \subset\ E_{n-1}\ \subset\ E_n\ =\ E.
$$
be the Harder--Narasimhan filtration of $E$. Then the following two statements hold:
\begin{enumerate}
\item Take a point $x_0\, \in\, X$. The holomorphic vector bundle $E$ admits a logarithmic connection
$D$ satisfying the following two conditions:
\begin{itemize}
\item $D$ is nonsingular over $X\setminus \{x_0\}$, and

\item $D$ preserves $E_i$ for every $1\, \leq\, i\, \leq\, n$.
\end{itemize}

\item Let $(V,\, \phi)$ be a holomorphic Lie algebroid over $X$ satisfying the condition that the
anchor map $\phi$ is not surjective. Then $E$ admits a Lie algebroid connection $D$ for $(V,\, \phi)$
that preserves $E_i$ for every $1\, \leq\, i\, \leq\, n$.
\end{enumerate}
\end{corollary}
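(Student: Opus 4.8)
The plan is to reduce both assertions to the principal-bundle results already established, by passing through the frame bundle of $E$ and the parabolic subgroup of $\text{GL}(r,\mathbb{C})$ attached to the Harder--Narasimhan flag type. Set $G\,=\,\text{GL}(r,\mathbb{C})$ with $r\,=\,\text{rank}(E)$, and let $E_G$ be the holomorphic frame bundle of $E$, so that $E\,=\,E_G\times_G\mathbb{C}^r$ is the bundle associated to the standard representation. Writing $r_i\,=\,\text{rank}(E_i)$, let $P\,\subset\,G$ be the parabolic subgroup stabilizing the flag
$$
0\ \subset\ \mathbb{C}^{r_1}\ \subset\ \mathbb{C}^{r_2}\ \subset\ \cdots\ \subset\ \mathbb{C}^{r_{n-1}}\ \subset\ \mathbb{C}^{r_n}\,=\,\mathbb{C}^r.
$$
The first step is to record that, under this correspondence, the Harder--Narasimhan filtration $E_1\,\subset\,\cdots\,\subset\,E_n\,=\,E$ of $E$ is exactly the datum of the Harder--Narasimhan reduction $E_P\,\subset\,E_G$ of the frame bundle to $P$: a holomorphic reduction of $E_G$ to $P$ is the same thing as a filtration of $E$ by holomorphic subbundles of ranks $r_1,\,\cdots,\,r_n$, and the reduction is the Harder--Narasimhan one precisely when the corresponding filtration is the Harder--Narasimhan filtration of $E$.

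With this dictionary in place, the second step is the associated-bundle transfer of connections. For any $P$--module $W$ we may form $E_P\times_P W$, and a holomorphic Lie algebroid connection (respectively, a logarithmic connection) on the principal $P$--bundle $E_P$ canonically induces one on $E_P\times_P W$. Applying this to the standard $P$--module $\mathbb{C}^r$ recovers $E$, while applying it to the $P$--submodule $\mathbb{C}^{r_i}\,\subset\,\mathbb{C}^r$ recovers the subbundle $E_i\,=\,E_P\times_P\mathbb{C}^{r_i}\,\subset\,E$. Since each $\mathbb{C}^{r_i}$ is $P$--invariant, the induced connection on $E$ restricts to the induced connection on every $E_i$; in other words the connection on $E$ coming from $E_P$ automatically preserves each $E_i$. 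The pole locus and the nonsingular locus are likewise preserved by this construction, so a logarithmic connection on $E_P$ that is nonsingular over $X\setminus\{x_0\}$ induces a logarithmic connection on $E$ with the same property.

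To prove statement (2), note that by Lemma \ref{lem1} the Harder--Narasimhan reduction $E_P\,\subset\,E_G$ is infinitesimally rigid, so Theorem \ref{thm0} furnishes a holomorphic Lie algebroid connection on $E_P$ for $(V,\,\phi)$. Transferring it to $E$ by the construction above yields a Lie algebroid connection $D$ on $E$ for $(V,\,\phi)$ that preserves every $E_i$, as required. For statement (1), apply Corollary \ref{cor00} to the Harder--Narasimhan reduction $E_P\,\subset\,E_G$: it provides a logarithmic connection on $E_P$ that is nonsingular over $X\setminus\{x_0\}$. Transferring this connection to $E$ gives a logarithmic connection $D$ on $E$ which is nonsingular over $X\setminus\{x_0\}$ and which preserves every $E_i$.

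The content of the corollary is thus entirely contained in Theorem \ref{thm0} and Corollary \ref{cor00}; the only thing requiring care is the dictionary of the first two paragraphs, namely the identification of the Harder--Narasimhan filtration with a Harder--Narasimhan reduction to the appropriate parabolic, together with the verification that the associated-bundle construction sends a connection on the reduction $E_P$ to a filtration-preserving connection on $E$. This last verification---that $P$--invariance of the subspaces $\mathbb{C}^{r_i}$ forces the induced connection to preserve the $E_i$---is the main (and essentially the only) point to be checked, and it is a standard feature of the associated-bundle formalism.
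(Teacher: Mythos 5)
Your proposal is correct and follows the same route as the paper: both reduce to the principal-bundle statements by taking $G=\mathrm{GL}(r,\mathbb{C})$ and $P$ the parabolic stabilizing a flag of the Harder--Narasimhan ranks, then invoke Corollary \ref{cor00} for statement (1) and Lemma \ref{lem1} with Theorem \ref{thm0} for statement (2). The paper's proof is just terser, leaving implicit the dictionary between reductions and filtrations and the associated-bundle transfer that you spell out.
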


\section{Lie algebroids and Lie algebroid connections}

\subsection{Holomorphic Lie algebroids}

Let $X$ be a compact connected Riemann surface.
The holomorphic tangent bundle
of $X$ will be denoted by $TX$, while the holomorphic cotangent bundle
of $X$ will be denoted by $K_X$.
The first jet bundle of a holomorphic vector bundle
$W$ over $X$ will be denoted by $J^1(W)$.

A $\mathbb{C}$--Lie algebra structure on a holomorphic vector
bundle $V$ over $X$ is a $\mathbb{C}$--bilinear pairing defined by a sheaf homomorphism
$$
[-,\, -] \,\,:\,\, V\otimes_{\mathbb C} V \,\, \longrightarrow\,\, V,
$$
which is given by a holomorphic homomorphism $J^1(V)\otimes J^1(V)\, \longrightarrow\, V$
of vector bundles, such that
$[s,\, t]\,=\, -[t,\, s]$ and $[[s,\, t],\, u]+[[t,\, u],\, s]+[[u,\, s],\, t]\,=\,0$
for all locally defined holomorphic sections $s,\, t,\, u$ of $V$.
The Lie bracket operation on the sheaf of holomorphic vector fields on $X$ gives the structure of
a $\mathbb C$--Lie algebra on $TX$.

A holomorphic Lie algebroid over $X$ is a pair $(V,\, \phi)$, where
\begin{enumerate}
\item $V$ is a holomorphic vector bundle over $X$ equipped with the structure of a
$\mathbb{C}$--Lie algebra, and

\item $\phi\, :\, V\, \longrightarrow\, TX$ is an ${\mathcal O}_X$--linear homomorphism
such that
\begin{equation}\label{esc}
[s,\, f\cdot t]\ =\ f\cdot [s,\, t]+\phi(s)(f)\cdot t
\end{equation}
for all locally defined holomorphic sections
$s,\, t$ of $V$ and all locally defined holomorphic functions $f$ on $X$.
\end{enumerate}
The above homomorphism $\phi$ is called the \textit{anchor map} of the Lie algebroid.
The two conditions in the definition of a Lie algebroid imply that
\begin{equation}\label{e5}
\phi([s,\, t])\ =\ [\phi(s),\, \phi(t)]
\end{equation}
for all locally defined holomorphic sections $s,\, t$ of $V$.

\subsection{Lie algebroid connection}

Let $\mathbb G$ be a complex Lie group. Take a holomorphic
principal $\mathbb G$--bundle
\begin{equation}\label{f1}
p\ :\ E_{\mathbb G}\ \longrightarrow\ X.
\end{equation}
The action of $\mathbb G$
on $E_{\mathbb G}$ produces an action of $\mathbb G$ on the direct image $p_*TE_{\mathbb G}$ of
the holomorphic tangent bundle $TE_{\mathbb G}$. The invariant part
\begin{equation}\label{e6}
\psi\ :\ \text{At}(E_{\mathbb G})\ :=\ (p_*TE_{\mathbb G})^{\mathbb G} \ \longrightarrow\ X
\end{equation}
is the Atiyah bundle for $E_{\mathbb G}$ \cite{At}. It fits in a short exact sequence of holomorphic vector bundles
\begin{equation}\label{e7}
0\, \longrightarrow\, \text{ad}(E_{\mathbb G})\, \xrightarrow{\,\,\,\iota\,\,\,} \, \text{At}(E_{\mathbb G})
\, \xrightarrow{\,\,\,\varpi\,\,\,} \, TX\, \longrightarrow\, 0,
\end{equation}
where $\text{ad}(E_{\mathbb G})$ is the adjoint vector bundle for $E_{\mathbb G}$ (see \cite{At}); the projection
$\varpi$ in \eqref{e7} is given by the differential $dp\, :\, TE_{\mathbb G}\, \longrightarrow\, p^*TX$
of the map $p$ in \eqref{f1}. The sequence in \eqref{e7} is known as the Atiyah exact sequence for $E_{\mathbb G}$.

Let $(V,\, \phi)$ be a holomorphic Lie algebroid over $X$. Consider the homomorphism
$$
\psi\ :\ V\oplus \text{At}(E_{\mathbb G}) \ \longrightarrow\ TX, \ \ \, (v,\, w)\
\longmapsto\ \phi(v) - \varpi(w),
$$
where $\varpi$ is the homomorphism in \eqref{e7}. Define
\begin{equation}\label{e8}
\mathcal{A}(E_{\mathbb G}) \ :=\ \text{kernel}(\psi)\ \subset\ V\oplus \text{At}(E_{\mathbb G}).
\end{equation}
The $\mathbb C$--Lie algebra structure on $V\oplus \text{At}(E_{\mathbb G})$, given by the
$\mathbb C$--Lie algebra structures on $V$ and $\text{At}(E_{\mathbb G})$, restricts to a
$\mathbb C$--Lie algebra structure on $\mathcal{A}(E_{\mathbb G})$.
Restricting the natural projection $V\oplus \text{At}(E_{\mathbb G})\, \longrightarrow\, V$ to
$\mathcal{A}(E_{\mathbb G})\, \subset\, V\oplus \text{At}(E_{\mathbb G})$ we obtain a homomorphism
\begin{equation}\label{e9}
\rho\ :\ \mathcal{A}(E_{\mathbb G}) \ \longrightarrow\ V;
\end{equation}
note that $\text{kernel}(\rho) \,=\, \text{kernel}(\varpi)\,=\, \text{ad}(E_{\mathbb G})$.
Similarly, restricting the natural projection $V\oplus \text{At}(E_{\mathbb G})\, \longrightarrow\,
\text{At}(E_{\mathbb G})$
to $\mathcal{A}(E_{\mathbb G})\, \subset\, V\oplus \text{At}(E_{\mathbb G})$ we obtain a homomorphism
\begin{equation}\label{e10}
\varphi\ :\ \mathcal{A}(E_{\mathbb G}) \ \longrightarrow\ \text{At}(E_{\mathbb G}).
\end{equation}

We have the commutative diagram of homomorphisms of vector bundles
\begin{equation}\label{e11}
\begin{matrix}
0 &\longrightarrow & {\rm ad}(E_{\mathbb G}) & \longrightarrow & {\mathcal A}(E_{\mathbb G}) &
\xrightarrow{\,\,\,\rho\,\,\,} & V & \longrightarrow & 0\\
&& \Big\Vert &&\,\,\, \Big\downarrow\varphi &&\,\,\, \Big\downarrow \phi\\
0 & \longrightarrow & {\rm ad}(E_{\mathbb G}) & \stackrel{\iota}{\longrightarrow} &
{\rm At}(E_{\mathbb G})& \xrightarrow{\,\,\,\varpi\,\,\,} & TX & \longrightarrow & 0
\end{matrix}
\end{equation}
where $\varphi$ and $\rho$ are constructed in \eqref{e10} and \eqref{e9} respectively.

\begin{definition}\label{def1}
A \textit{holomorphic Lie algebroid connection} on $E_{\mathbb G}$ for $(V,\,\phi)$ is a
holomorphic homomorphism
$$
\delta\ : \ V\ \longrightarrow\ \mathcal{A}(E_{\mathbb G})
$$
such that $\rho\circ\delta\,=\, {\rm Id}_V$, where $\rho$ is the homomorphism in \eqref{e9}.
\end{definition}

When $V\,=\, TX$ and $\phi\,=\, {\rm Id}_{TX}$, a holomorphic Lie algebroid connection on
$E_{\mathbb G}$ is a usual holomorphic connection on the principal
$\mathbb G$--bundle $E_{\mathbb G}$.
When ${\mathbb G}\,=\, \text{GL}(r,{\mathbb C})$, the notion of a holomorphic Lie algebroid connection
on $E_{\mathbb G}$ coincides with that for the holomorphic vector bundle of rank $r$ on $X$ associated
to $E_{\mathbb G}$ for the standard representation of $\text{GL}(r,{\mathbb C})$.

\begin{lemma}\label{lem-1}
Giving a holomorphic Lie algebroid connection on $E_{\mathbb G}$ for $(V,\, \phi)$ is equivalent to giving
a holomorphic homomorphism
$$
\delta'\ :\ V\ \longrightarrow\ {\rm At}(E_G)
$$
such that $\varpi\circ\delta'\,=\, \phi$, where $\varpi$ is the projection in \eqref{e7}.
\end{lemma}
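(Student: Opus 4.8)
The plan is to exhibit the two constructions explicitly and verify that they are mutually inverse, using only the description of $\mathcal{A}(E_{\mathbb G})$ as the kernel of $\psi$ inside $V\oplus \text{At}(E_{\mathbb G})$ from \eqref{e8}, together with the fact that $\rho$ and $\varphi$ from \eqref{e9} and \eqref{e10} are the restrictions to $\mathcal{A}(E_{\mathbb G})$ of the two natural projections. The whole statement is then a direct diagram chase in \eqref{e11}.

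First I would start from a holomorphic Lie algebroid connection $\delta\,:\,V\,\longrightarrow\,\mathcal{A}(E_{\mathbb G})$ with $\rho\circ\delta\,=\,{\rm Id}_V$ and set $\delta'\,:=\,\varphi\circ\delta\,:\,V\,\longrightarrow\,\text{At}(E_{\mathbb G})$. Since by \eqref{e8} a section of $\mathcal{A}(E_{\mathbb G})$ is a pair $(v,\,w)\,\in\,V\oplus \text{At}(E_{\mathbb G})$ satisfying $\phi(v)\,=\,\varpi(w)$, and since $\rho$, $\varphi$ are the two projections, the condition $\rho\circ\delta\,=\,{\rm Id}_V$ forces $\delta(v)\,=\,(v,\,\delta'(v))$. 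The defining membership $\psi(\delta(v))\,=\,0$ then reads $\phi(v)-\varpi(\delta'(v))\,=\,0$, i.e. $\varpi\circ\delta'\,=\,\phi$, which is the required relation. Conversely, given $\delta'\,:\,V\,\longrightarrow\,\text{At}(E_{\mathbb G})$ with $\varpi\circ\delta'\,=\,\phi$, I would define $\delta(v)\,:=\,(v,\,\delta'(v))$; the relation $\varpi\circ\delta'\,=\,\phi$ is precisely the assertion that $\psi(\delta(v))\,=\,\phi(v)-\varpi(\delta'(v))\,=\,0$, so $\delta$ takes values in $\mathcal{A}(E_{\mathbb G})\,=\,\text{kernel}(\psi)$, and $\rho\circ\delta\,=\,{\rm Id}_V$ holds by construction.

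Finally I would observe that these two assignments $\delta\,\longmapsto\,\varphi\circ\delta$ and $\delta'\,\longmapsto\,(v\,\mapsto\,(v,\,\delta'(v)))$ are inverse to each other, which is immediate from the common formula $\delta(v)\,=\,(v,\,\delta'(v))$. There is no genuine obstacle here: the content lies entirely in reading off the descriptions of $\rho$, $\varphi$ and $\psi$ from the definitions preceding \eqref{e8}, and the equivalence follows by unwinding them; one only needs to note that each construction is ${\mathcal O}_X$--linear and holomorphic, which is clear since it is built from the projections of the direct sum $V\oplus \text{At}(E_{\mathbb G})$.
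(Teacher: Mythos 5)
Your proposal is correct and follows essentially the same route as the paper: both directions are handled by the formulas $\delta'=\varphi\circ\delta$ and $\delta(v)=(v,\delta'(v))$, with the membership in $\mathcal{A}(E_{\mathbb G})=\mathrm{kernel}(\psi)$ equivalent to $\varpi\circ\delta'=\phi$. Your additional check that the two assignments are mutually inverse is a harmless elaboration of what the paper leaves implicit.
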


\begin{proof}
Take any holomorphic homomorphism $\delta'\, :\, V\, \longrightarrow\, {\rm At}(E_G)$
such that $\varpi\circ\delta'\,=\, \phi$. Consider the homomorphism
$\delta\, :\, V\, \longrightarrow\, \mathcal{A}(E_{\mathbb G})$ that sends any $v\, \in\, V$
to $(v,\, \delta'(v))\, \in\, \mathcal{A}(E_{\mathbb G})$; note that the given condition
that $\varpi\circ\delta'\,=\, \phi$ ensures that $(v,\, \delta'(v))\, \in\, \mathcal{A}(E_{\mathbb G})$. It
is evident that $\delta$ is a holomorphic Lie algebroid connection on $E_{\mathbb G}$ for $(V,\, \phi)$.

Conversely, if $\delta\, :\, V\, \longrightarrow\, \mathcal{A}(E_{\mathbb G})$
is a holomorphic Lie algebroid connection on $E_{\mathbb G}$ for $(V,\, \phi)$, then
$\delta'\,=\, \varphi\circ\delta$, where $\varphi$ is constructed in \eqref{e10}, is a homomorphism
$V\, \longrightarrow\, {\rm At}(E_G)$ that satisfies the condition $\varpi\circ\delta'\,=\, \phi$.
\end{proof}

\section{Rigid parabolic reduction of structure group}

Let $G$ be a connected reductive complex affine algebraic group. The Lie algebra of $G$ will be denoted
by $\mathfrak g$. Since $G$ is reductive, there are $G$--invariant nondegenerate symmetric bilinear forms
on $\mathfrak g$. Fix a $G$--invariant nondegenerate symmetric bilinear form
\begin{equation}\label{e1}
\textbf{B}\ \in\ \text{Sym}^2({\mathfrak g}^*)^G .
\end{equation}
This form $\textbf{B}$ produces an isomorphism of $G$--modules
\begin{equation}\label{e2}
{\mathfrak g}\ \xrightarrow{\,\,\,\, \sim\,\,\,}\ {\mathfrak g}^*.
\end{equation}

A Zariski closed connected subgroup of
$G$ is called parabolic if the quotient $G/P$ is a projective variety \cite[p.~183, Section 30]{Hu2},
\cite[\S~11.2]{Bo}. Fix a parabolic proper subgroup
\begin{equation}\label{e3}
P \ \subsetneq \ G.
\end{equation}
The Lie algebra of $P$ will be denoted by $\mathfrak p$. Let
\begin{equation}\label{e4}
R_n(\mathfrak p) \subset\ \mathfrak p
\end{equation}
be the nilpotent radical of $\mathfrak p$ \cite[Ch.~1, \S~3]{Hu1}; so $R_n(\mathfrak p)$ is the
Lie algebra of the unipotent radical of $P$ (see \cite[11.21, p.~157]{Bo}, \cite[p.~125]{Hu2} for
unipotent radical). Consider the orthogonal complement
${\mathfrak p}^\perp\, \subset\, {\mathfrak g}$ of ${\mathfrak p}\, \subset\, {\mathfrak g}$
for the pairing $\textbf{B}$ in \eqref{e1}. It is straightforward to check that
\begin{equation}\label{e12}
{\mathfrak p}^\perp\ =\ R_n(\mathfrak p).
\end{equation}
{}From \eqref{e12} it follows that
\begin{equation}\label{e13}
R_n(\mathfrak p)^* \ =\ {\mathfrak g}/{\mathfrak p}.
\end{equation}

As before, $X$ is a compact connected Riemann surface. Let $$p\, :\, E_G\, \longrightarrow\, X$$ be a
holomorphic principal $G$--bundle on $X$. As before, the adjoint vector bundle for $E_G$ is denoted
by $\text{ad}(E_G)$. Since $G/P$ (see \eqref{e3}) is compact, it follows that
$E_G$ admits holomorphic reductions of structure group to the subgroup $P\, \subset\, G$. Let
\begin{equation}\label{e14}
E_P\ \subset\ E_G
\end{equation}
be a holomorphic reduction of structure group of $E_G$ to $P$. Consider the adjoint vector bundle
$\text{ad}(E_P)\, \subset\, \text{ad}(E_G)$ for $E_P$. Let $${\mathcal R}(E_P)\ \subset\
\text{ad}(E_P)$$ be the nilpotent radical bundle; so for every point $x\, \in\, X$, the fiber
${\mathcal R}(E_P)_x$ of ${\mathcal R}(E_P)$ over the point $x$ is the nilpotent radical of the
fiber $\text{ad}(E_P)_x$. Note that $\text{ad}(E_P)_x$ is isomorphic to $\mathfrak p$ and ${\mathcal R}(E_P)_x$
is isomorphic to $R_n(\mathfrak p)$ (see \eqref{e4}). From \eqref{e13} it follows immediately that
\begin{equation}\label{e15}
{\mathcal R}(E_P)^*\ =\ \text{ad}(E_G)/\text{ad}(E_P).
\end{equation}

\begin{definition}\label{def2}
The holomorphic reduction of structure group $E_P\, \subset\, E_G$ in \eqref{e14} is called \textit{infinitesimally
rigid} if
$$
H^0(X,\, \text{ad}(E_G)/\text{ad}(E_P))\ =\ 0.
$$
{}From \eqref{e15} it follows that $H^0(X,\, \text{ad}(E_G)/\text{ad}(E_P))\, =\, 0$ if and only if
$H^0(X,\, {\mathcal R}(E_P)^*)\, =\, 0$.
\end{definition}

\begin{remark}\label{rem1}
Consider the space of holomorphic reductions of structure group of $E_G$ to $P$. The tangent space to
it at such a reduction $E_P\, \subset\, E_G$ is given by $H^0(X,\, \text{ad}(E_G)/\text{ad}(E_P))$. Therefore,
$E_P$ is infinitesimally rigid if and only if this tangent space at the point corresponding $E_P$ is the
zero vector space.
\end{remark}

\begin{example}\label{ex1}
Let $E_{\text{GL}(2, {\mathbb C})} \, \longrightarrow\, {\mathbb C}{\mathbb P}^1$ denote the
holomorphic principal $\text{GL}(2, {\mathbb C})$--bundle defined by ${\mathcal O}_{{\mathbb C}{\mathbb P}^1}\oplus
{\mathcal O}_{{\mathbb C}{\mathbb P}^1}(1)$. Let $P\, \subset\, \text{GL}(2, {\mathbb C})$ be the
parabolic subgroup given by all $A\,=\, (A_{ij})_{1\leq i,j\, \leq 2}\, \in\,
\text{GL}(2, {\mathbb C})$ such that $a_{21}\,=\, 0$.

If $E_P$ is the parabolic reduction of $E_{\text{GL}(2, {\mathbb C})}$ given by the filtration
${\mathcal O}_{{\mathbb C}{\mathbb P}^1}(1)\, \subset\, {\mathcal O}_{{\mathbb C}{\mathbb P}^1}\oplus
{\mathcal O}_{{\mathbb C}{\mathbb P}^1}(1)$. Then $\text{ad}(E_{\text{GL}(2, {\mathbb C})})/\text{ad}(E_P)\,=\,
{\mathcal O}_{{\mathbb C}{\mathbb P}^1}(-1)$. Hence the reduction $E_P$ is infinitesimally rigid.

If $E_P$ is the parabolic reduction of $E_{\text{GL}(2, {\mathbb C})}$ given by the filtration
${\mathcal O}_{{\mathbb C}{\mathbb P}^1}\, \subset\, {\mathcal O}_{{\mathbb C}{\mathbb P}^1}\oplus
{\mathcal O}_{{\mathbb C}{\mathbb P}^1}(1)$. Then $\text{ad}(E_{\text{GL}(2, {\mathbb C})})/\text{ad}(E_P)\,=\,
{\mathcal O}_{{\mathbb C}{\mathbb P}^1}(1)$. Hence the reduction $E_P$ is \textit{not} infinitesimally rigid.
\end{example}

As in \eqref{e14}, $E_G$ is a holomorphic principal $G$--bundle on a compact connected Riemann surface $X$.
We assume that $E_G$ is \textit{not} semistable. Let
$$
E_P\ \subset\ E_G
$$
be the Harder--Narasimhan reduction of $E_G$ (see \cite{AAB}).

\begin{lemma}\label{lem1}
The above Harder--Narasimhan reduction $E_P\, \subset\ E_G$ is infinitesimally rigid.
\end{lemma}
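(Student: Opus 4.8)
The plan is to show directly that $H^0(X,\, \text{ad}(E_G)/\text{ad}(E_P))\,=\,0$, which by Definition \ref{def2} and \eqref{e15} is exactly the assertion of infinitesimal rigidity. First I would identify the quotient bundle geometrically: the inclusion $\mathfrak p\,\subset\,\mathfrak g$ of $P$--modules yields $\text{ad}(E_G)/\text{ad}(E_P)\,=\, E_P(\mathfrak g/\mathfrak p)$, the vector bundle associated to $E_P$ for the $P$--module $\mathfrak g/\mathfrak p$. Now a semistable vector bundle of negative degree on $X$ admits no nonzero holomorphic section, and $H^0(X,\,-)$ is left exact; consequently, given a short exact sequence $0\,\to\, A\,\to\, B\,\to\, C\,\to\, 0$ with $H^0(X,\,A)\,=\,H^0(X,\,C)\,=\,0$ one has $H^0(X,\,B)\,=\,0$, and iterating this along a filtration reduces everything to its successive quotients. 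So it suffices to exhibit a filtration of $E_P(\mathfrak g/\mathfrak p)$ by holomorphic subbundles whose successive quotients are semistable of negative degree.

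Next I would invoke the two defining features of the Harder--Narasimhan reduction $E_P$ (see \cite{AAB}): for a Levi factor $L\,\subset\, P$, the Levi bundle $E_L\,:=\, E_P\times^P L$ is a semistable principal $L$--bundle, and the reduction is maximally destabilizing, which translates into a positivity statement for the characters of $P$ built from the nilpotent radical. To use these, let $\lambda$ be the central cocharacter of $L$ defining $P$, and decompose $\mathfrak g\,=\,\bigoplus_{k}\mathfrak g_k$ into its $\lambda$--weight spaces, so that $\mathfrak p\,=\,\bigoplus_{k\ge 0}\mathfrak g_k$, the nilpotent radical is $R_n(\mathfrak p)\,=\,\bigoplus_{k>0}\mathfrak g_k$ (matching \eqref{e4} and \eqref{e12}), and $\mathfrak g/\mathfrak p$ carries a $P$--module filtration whose associated graded is $\bigoplus_{k<0}\mathfrak g_k$, each $\mathfrak g_k$ being an $L$--module. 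Because the unipotent radical of $P$ raises the $\lambda$--weight, it acts trivially on each graded piece, so each piece is genuinely an $L$--module and the associated subquotient of $E_P(\mathfrak g/\mathfrak p)$ is $E_L(\mathfrak g_k)$. Refining further by the connected center $Z(L)^{\circ}$, I would write the graded pieces as $L$--modules $W_\chi$ on which $Z(L)^{\circ}$ acts through a single character $\chi$, necessarily with $\langle\lambda,\,\chi\rangle\,<\,0$.

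It then remains to check that each $E_L(W_\chi)$ is semistable of negative degree. Semistability follows from the theorem of Ramanan and Ramanathan that an associated bundle $E_L(W)$ of a semistable principal $L$--bundle is semistable whenever $Z(L)^{\circ}$ acts on $W$ by scalars, as is the case for $W_\chi$. Since the semisimple part $[L,\,L]$ carries no nontrivial characters, $\det W_\chi\,=\,\chi^{\,\dim W_\chi}$ as characters of $L$, whence $\mu(E_L(W_\chi))\,=\,\deg(E_L(\chi))$. The remaining, and main, point is the sign: I must confirm that $\deg(E_L(\chi))\,<\,0$ for precisely those $\chi$ occurring in $\mathfrak g/\mathfrak p$, i.e. with $\langle\lambda,\,\chi\rangle\,<\,0$. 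This is exactly where the maximal destabilization property of the Harder--Narasimhan reduction enters, and it is the step I expect to require the most care, since it amounts to reading off the correct sign convention from the characterization of $E_P$ in \cite{AAB}; the vector bundle case is the reassuring model, where the Harder--Narasimhan filtration has strictly decreasing slopes and every graded piece $\text{Hom}(Q_i,\,Q_j)$ of $\text{ad}(E_G)/\text{ad}(E_P)$ has slope $\mu(Q_j)-\mu(Q_i)\,<\,0$. Granting this, each $E_L(W_\chi)$ is semistable of negative degree, so $H^0(X,\, E_L(W_\chi))\,=\,0$, and running up the filtration gives $H^0(X,\, E_P(\mathfrak g/\mathfrak p))\,=\, H^0(X,\,\text{ad}(E_G)/\text{ad}(E_P))\,=\,0$, which is the claim.
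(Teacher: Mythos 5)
Your overall architecture is sound, but you should know at the outset that the paper does none of this work: its entire proof of Lemma~\ref{lem1} is the citation \cite[p.~705, Corollary~1]{AAB}, which directly yields the vanishing $H^0(X,\, \text{ad}(E_G)/\text{ad}(E_P))\,=\,0$ for the Harder--Narasimhan reduction. What you have written is in effect a reconstruction of the proof of that cited corollary: the identification $\text{ad}(E_G)/\text{ad}(E_P)\,=\,E_P({\mathfrak g}/{\mathfrak p})$, the weight filtration with $L$--module graded pieces, semistability of the associated bundles via Ramanan--Ramanathan applied to the semistable Levi bundle, and the reduction of the slope of each piece to the degree of a line bundle associated to a character are all correct steps and are essentially those taken in \cite{AAB}.

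However, the proof as written has a genuine gap, and you flag it yourself: the negativity $\deg E_L(\det W_\chi)\,<\,0$ for the pieces of ${\mathfrak g}/{\mathfrak p}$ is introduced only with ``granting this.'' That inequality is the entire mathematical content of the lemma. Everything preceding it applies verbatim to any parabolic reduction whose Levi bundle is semistable, and for such reductions the conclusion is false in general: in Example~\ref{ex1}, the reduction given by ${\mathcal O}_{{\mathbb C}{\mathbb P}^1}\,\subset\,{\mathcal O}_{{\mathbb C}{\mathbb P}^1}\oplus{\mathcal O}_{{\mathbb C}{\mathbb P}^1}(1)$ has (trivially) semistable Levi quotient, your filtration produces the single semistable graded piece ${\mathcal O}_{{\mathbb C}{\mathbb P}^1}(1)$, and the sign comes out positive, so $H^0\,\neq\,0$ and the reduction is not infinitesimally rigid. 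Closing the gap is also not a routine sign check: the characterization of the Harder--Narasimhan reduction gives $\deg E_P(\chi)\,>\,0$ only for nontrivial characters $\chi$ of $P$ that are trivial on the center and are nonnegative combinations of simple roots (cf.\ Remark~\ref{rem2}), and a weight of ${\mathfrak g}/{\mathfrak p}$ is generally not itself a character of $P$; one must express the determinants $\det W_\chi$ --- equivalently, by \eqref{e13} and \eqref{e15}, the determinants of the graded pieces of ${\mathcal R}(E_P)$ --- as combinations of the admissible characters with coefficients of the correct sign. That combinatorial step is precisely what \cite{AAB} supplies and what your write-up omits, so as it stands your argument proves the lemma only modulo the very property that distinguishes the Harder--Narasimhan reduction from other parabolic reductions with semistable Levi bundle.
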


\begin{proof}
This follows directly from \cite[p.~705, Corollary 1]{AAB}.
\end{proof}

\section{Connection preserving a reduction of structure group}

As before, $X$ is a compact connected Riemann surface, $G$ is a connected reductive complex affine
algebraic group and $P\, \subsetneq\, G$ is a parabolic subgroup. Take a holomorphic principal
$G$--bundle $E_G$ on $X$ and a holomorphic reduction of structure group
\begin{equation}\label{e22}
E_P\ \subset\ E_G
\end{equation}
of $E_G$ to the subgroup $P$.

Let $R_u(P)\, \subset\, P$ be the unipotent radical \cite[p.~184, Section 30.2]{Hu2}. Let
\begin{equation}\label{e16}
L(P) \ :=\ P/R_u(P)
\end{equation}
be the Levi quotient of $P$. Consider
\begin{equation}\label{e17}
E_{L(P)} \ :=\ E_P/R_u(P)\ \longrightarrow\ X.
\end{equation}
Since $R_u(P)$ is a normal subgroup of $P$, the quotient $E_{L(P)}$ in \eqref{e17} coincides with
the holomorphic principal $L(P)$--bundle on $X$ obtained by extending the structure group of
$E_P$ using the quotient map $P\, \longrightarrow\, P/R_u(P)\,=\, L(P)$ (see \eqref{e16}).

\begin{proposition}\label{prop1}
Let $(V,\, \phi)$ be a holomorphic Lie algebroid over $X$ satisfying the condition that the
anchor map $\phi$ is not surjective. Then the holomorphic principal $L(P)$--bundle
$E_{L(P)}$ in \eqref{e17} admits a holomorphic Lie algebroid connection for $(V,\,\phi)$.
\end{proposition}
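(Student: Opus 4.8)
The plan is to invoke Lemma~\ref{lem-1}: producing a holomorphic Lie algebroid connection on $E_{L(P)}$ for $(V,\,\phi)$ is the same as producing a holomorphic homomorphism $\delta'\colon V\,\longrightarrow\, \text{At}(E_{L(P)})$ with $\varpi\circ\delta'\,=\,\phi$, where $\varpi$ is the projection in the Atiyah sequence \eqref{e7} for $E_{L(P)}$. Applying $\mathcal{H}om(V,\,-)$ to \eqref{e7} and chasing the resulting long exact sequence, a lift of $\phi$ exists precisely when the connecting image $\phi^*(a)\,\in\, H^1(X,\,\text{ad}(E_{L(P)})\otimes V^*)$ vanishes, where $a\,\in\, H^1(X,\,\text{ad}(E_{L(P)})\otimes K_X)$ is the extension (Atiyah) class of \eqref{e7}. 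If $\phi\,=\,0$ we may simply take $\delta'\,=\,0$, so assume $\phi\,\neq\, 0$. Here I would use the non-surjectivity hypothesis: the cokernel of $\phi$ is then a nonzero torsion sheaf on the curve $X$; fixing a point $x_0$ in its support gives $\text{image}(\phi)\,\subseteq\, TX(-x_0)$, so $\phi$ factors as $\phi\,=\, j\circ\bar\phi$ with $\bar\phi\colon V\,\longrightarrow\, TX(-x_0)$ and $j\colon TX(-x_0)\hookrightarrow TX$ the inclusion. Since $\phi^*\,=\,\bar\phi^*\circ j^*$, it suffices to prove
$$
j^*(a)\ =\ 0\ \in\ H^1(X,\,\text{ad}(E_{L(P)})\otimes K_X(x_0)),
$$
where $j^*$ is induced by $K_X\hookrightarrow K_X(x_0)$; geometrically this says $E_{L(P)}$ carries a logarithmic connection nonsingular away from $x_0$.

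The heart of the matter is this vanishing, and it is where reductivity of $L(P)$ enters. Fixing a nondegenerate $L(P)$--invariant form on $\text{Lie}(L(P))$ gives a self-duality $\text{ad}(E_{L(P)})\,\cong\,\text{ad}(E_{L(P)})^*$, so Serre duality yields
$$
H^1(X,\,\text{ad}(E_{L(P)})\otimes K_X(x_0))^*\ \cong\ H^0(X,\,\text{ad}(E_{L(P)})(-x_0)).
$$
By functoriality of Serre duality under $j$, the class $j^*(a)$ vanishes if and only if $\langle a,\,s\rangle\,=\,0$ for every global section $s$ of $\text{ad}(E_{L(P)})$ that vanishes at $x_0$, the pairing being the Serre pairing $H^1(X,\,\text{ad}(E_{L(P)})\otimes K_X)\times H^0(X,\,\text{ad}(E_{L(P)}))\,\longrightarrow\,\mathbb{C}$ expressed through the invariant form. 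I would then observe that any such $s$ is everywhere nilpotent: for each homogeneous $L(P)$--invariant polynomial $p$ of positive degree, $p(s)$ is a holomorphic function on the compact connected surface $X$, hence constant, and its value equals $p(s(x_0))\,=\,p(0)\,=\,0$; by the standard description of the nilpotent cone as the common zero locus of such invariants, $s_x$ is nilpotent for every $x\,\in\, X$.

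It then remains to show that the Atiyah class annihilates a nilpotent section, and I would reduce this to the linear case. Choose a faithful representation $L(P)\hookrightarrow \text{GL}(N,\mathbb{C})$ with its invariant form compatible, up to scale, with the trace form, and let $W$ be the associated vector bundle. The inclusion $\text{ad}(E_{L(P)})\hookrightarrow \text{End}(W)$ sends $a$ to the Atiyah class $a_W$ of $W$ and sends $s$ to a nilpotent section of $\text{End}(W)$, so the pairing becomes $\int_X \text{tr}(a_W\circ s)$. Saturating the filtration $\ker(s)\subseteq\ker(s^2)\subseteq\cdots$ produces a filtration of $W$ by holomorphic subbundles with respect to which $s$ is strictly triangular, so its induced endomorphisms on the graded pieces are zero; since the trace pairing with $a_W$ only detects these diagonal blocks, the integral vanishes. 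This gives $\langle a,\,s\rangle\,=\,0$ for all nilpotent $s$, hence $j^*(a)\,=\,0$ and the desired lift $\delta'$.

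The step I expect to be the genuine obstacle is the last one, namely the vanishing of the Atiyah pairing against nilpotent global sections of the adjoint bundle; everything preceding it is formal homological algebra and the reduction to a single point, whereas this vanishing is exactly the input that forces the reductivity of the Levi quotient $L(P)$ to be used, both to guarantee self-duality of $\text{ad}(E_{L(P)})$ and to make the constancy-of-invariants argument identify the relevant sections as nilpotent.
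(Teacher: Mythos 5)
Your argument is correct, but it takes a genuinely different route from the paper: the paper's entire proof of Proposition~\ref{prop1} is to observe that $L(P)$ is connected and reductive, that non-surjectivity of $\phi$ means $(V,\,\phi)$ is not split, and then to invoke Theorem~1.1 of \cite{Bi} as a black box. What you have written is, in effect, a self-contained proof of (the relevant case of) that cited theorem: reduction via Lemma~\ref{lem-1} to killing the obstruction $\phi^*(a)$, factorization of a nonzero non-surjective $\phi$ through $TX(-x_0)$ (which is exactly the device the paper itself uses later, in \eqref{e23}, in the proof of Theorem~\ref{thm1}), Serre duality combined with self-duality of $\mathrm{ad}(E_{L(P)})$, the observation that a global section of the adjoint bundle vanishing at one point is everywhere nilpotent (constancy of invariant polynomials on a compact connected $X$), and finally the vanishing of the Atiyah pairing against nilpotent sections via the kernel filtration. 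Your version makes explicit where reductivity of $L(P)$ and non-surjectivity of the anchor actually enter, at the cost of redoing work the paper delegates to the literature. Two steps deserve a bit more care in a final write-up: (i) the trace form of the chosen faithful representation restricted to $\mathrm{Lie}(L(P))$ must be checked to be nondegenerate (true for a suitable faithful representation of a reductive group, and the conclusion $j^*(a)=0$ is independent of which invariant form is used); and (ii) the assertion that the trace pairing with $a_W$ ``only detects the diagonal blocks'' requires noting that, because the subsheaves $\ker(s^i)$ are automatically saturated and hence holomorphic subbundles on a curve, the Atiyah class $a_W$ can be represented by cochains valued in the filtration-preserving subalgebra bundle, whose product with the strictly filtration-decreasing $s$ is trace-free. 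Both are standard, so there is no gap, only compression.
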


\begin{proof}
Note that the Levi quotient $L(P)$ is connected and reductive.
Since the anchor map $\phi$ is not surjective, the Lie algebroid $(V,\, \phi)$ is not split.
Now Theorem 1.1 of \cite{Bi} says that the holomorphic principal $L(P)$--bundle
$E_{L(P)}$ admits a holomorphic Lie algebroid
connection for $(V,\,\phi)$.
\end{proof}

\begin{theorem}\label{thm1}
Let $(V,\, \phi)$ be a holomorphic Lie algebroid over $X$ satisfying the condition that the
anchor map $\phi$ is not surjective. Assume that the holomorphic reduction of structure group
$E_P\, \subset\, E_G$ (see \eqref{e22}) is infinitesimally rigid. Then the holomorphic
principal $P$--bundle $E_P$ admits a holomorphic Lie algebroid connection for $(V,\,\phi)$.
\end{theorem}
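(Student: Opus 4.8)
The plan is to use Lemma \ref{lem-1} and reduce the problem to constructing a holomorphic homomorphism $\delta'\colon V \to \text{At}(E_P)$ with $\varpi\circ\delta' = \phi$, where $\varpi$ is the projection in \eqref{e7} for $E_P$. The idea is to build $\delta'$ by pulling back a connection along the anchor, which is where non-surjectivity of $\phi$ becomes usable. If $\phi = 0$ then $\delta' = 0$ already works (its image lands in $\text{ad}(E_P)$), so I would assume $\phi \neq 0$. Since $X$ is a curve and $TX$ is a line bundle, the image $S := \text{Im}(\phi) \subseteq TX$ is a line subbundle of the form $S = TX\otimes\mathcal O_X(-D)$ for an effective divisor $D$, and $\phi$ not surjective forces $D > 0$. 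A routine local computation shows that $S$ is closed under the Lie bracket on $TX$, so $(S,\, \iota_S)$, with $\iota_S$ the inclusion, is itself a holomorphic Lie algebroid whose anchor is not surjective. Writing $\text{At}_S(E_P) := \varpi^{-1}(S) \subseteq \text{At}(E_P)$, it then suffices to produce a holomorphic splitting $t\colon S \to \text{At}_S(E_P)$ of $\varpi|_{\text{At}_S(E_P)}$ (an ``$S$--connection''): indeed $\delta' := t\circ\phi$ satisfies $\varpi\circ\delta' = \phi$, and Lemma \ref{lem-1} converts this into a Lie algebroid connection on $E_P$.

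The existence of such a $t$ is governed by the extension class $\theta_P$ of $0 \to \text{ad}(E_P) \to \text{At}_S(E_P) \to S \to 0$ in $\text{Ext}^1(S,\, \text{ad}(E_P)) = H^1(X,\, S^*\otimes\text{ad}(E_P))$, with $S^* = K_X\otimes\mathcal O_X(D)$. To exploit the hypotheses I would compare $E_P$ with its Levi quotient $E_{L(P)}$. The quotient $P \to L(P)$ yields the exact sequence $0 \to \mathcal R(E_P) \to \text{ad}(E_P) \to \text{ad}(E_{L(P)}) \to 0$ together with a compatible morphism of Atiyah sequences, under which $\theta_P$ pushes forward to the analogous class $\theta_L \in H^1(X,\, S^*\otimes\text{ad}(E_{L(P)}))$ for $E_{L(P)}$. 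Tensoring the adjoint sequence by $S^*$ produces the cohomology sequence $H^1(X,\, S^*\otimes\mathcal R(E_P)) \to H^1(X,\, S^*\otimes\text{ad}(E_P)) \to H^1(X,\, S^*\otimes\text{ad}(E_{L(P)}))$, in which $\theta_P\mapsto\theta_L$ under the second map.

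The hard part, and the step where both hypotheses must combine, is the vanishing $H^1(X,\, S^*\otimes\mathcal R(E_P)) = H^1(X,\, K_X\otimes\mathcal O_X(D)\otimes\mathcal R(E_P)) = 0$, which makes the second map above injective. Infinitesimal rigidity, via \eqref{e15} and Serre duality, gives $H^1(X,\, K_X\otimes\mathcal R(E_P))^* \cong H^0(X,\, \mathcal R(E_P)^*) = H^0(X,\, \text{ad}(E_G)/\text{ad}(E_P)) = 0$; and because $D$ is effective, the short exact sequence $0 \to K_X\otimes\mathcal R(E_P) \to K_X\otimes\mathcal O_X(D)\otimes\mathcal R(E_P) \to \mathcal T \to 0$ with $\mathcal T$ a torsion sheaf supported on $D$ (so $H^1(X,\, \mathcal T) = 0$) upgrades this to the required vanishing for the $D$-twist. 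Granting injectivity, it remains to check $\theta_L = 0$; but $L(P)$ is connected and reductive and $(S,\, \iota_S)$ has non-surjective anchor, so the argument of Proposition \ref{prop1} (that is, Theorem 1.1 of \cite{Bi}) applied to $(S,\, \iota_S)$ shows that $E_{L(P)}$ admits an $S$--connection, i.e. $\theta_L = 0$. Injectivity then forces $\theta_P = 0$, yielding $t$ and hence $\delta'$, which completes the proof. I expect the only genuinely delicate points to be the bookkeeping that $\theta_P\mapsto\theta_L$ is the pushforward along $\text{ad}(E_P)\to\text{ad}(E_{L(P)})$ and the twisting argument that promotes the rigidity vanishing from $K_X$ to $K_X\otimes\mathcal O_X(D)$.
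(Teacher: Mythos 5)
Your proposal is correct and follows essentially the same route as the paper's proof: replace $(V,\phi)$ by its image Lie algebroid $TX\otimes\mathcal{O}_X(-D)\hookrightarrow TX$, use Proposition \ref{prop1} (i.e.\ \cite{Bi}) to produce a connection on the Levi quotient bundle $E_{L(P)}$, and kill the remaining obstruction in $H^1(X,\,S^*\otimes\mathcal{R}(E_P))$ using Serre duality, the effectivity of $D$, and infinitesimal rigidity. The only differences are cosmetic: the paper realizes the comparison with $E_{L(P)}$ through the explicit pullback subbundle $\mathcal{B}(\delta)\subset\mathcal{A}(E_P)$ instead of your extension-class diagram chase, and it obtains the vanishing by dualizing the twisted bundle directly rather than via your torsion-quotient sequence.
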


\begin{proof}
The subsheaf $\phi(V)\, \subsetneq\, TX$ will be denoted by $\mathcal T$. So we have an inclusion map
\begin{equation}\label{e23}
\iota\ :\ {\mathcal T}\ = \ TX\otimes {\mathcal O}_X(-{\mathbb D})
\ \hookrightarrow\ TX,
\end{equation}
where $\mathbb D$ is a nonzero effective divisor. Consider the holomorphic Lie algebroid
\begin{equation}\label{e24}
{\mathcal L} \, :=\ ({\mathcal T},\, \iota)\ =\ (TX\otimes {\mathcal O}_X(-{\mathbb D}),\, \iota).
\end{equation}
If
$$
\delta'\ :\ {\mathcal T}\ \longrightarrow\ {\rm At}(E_P)
$$
is a holomorphic homomorphism giving a holomorphic Lie algebroid connection on
$E_P$ for the holomorphic Lie algebroid $\mathcal L$ in \eqref{e24} (see Lemma \ref{lem-1}), then
$$
\delta'\circ\phi\ :\ V \ \longrightarrow\ {\rm At}(E_P)
$$
is a holomorphic Lie algebroid connection on $E_P$ for the holomorphic Lie algebroid $(V,\, \phi)$.

Therefore, to prove the theorem it suffices to show that the holomorphic
principal $P$--bundle $E_P$ admits a holomorphic Lie algebroid connection for the holomorphic Lie
algebroid $\mathcal L$ in \eqref{e24}.

Consider the short exact sequences for the principal bundles $E_P$ and $E_{L(P)}$ (see \eqref{e17})
$$
0\ \longrightarrow\ {\rm ad}(E_P)\ \longrightarrow\ {\mathcal A}(E_P) \
\xrightarrow{\,\,\,\rho\,\,\,}\ {\mathcal T} \ \longrightarrow \ 0
$$
$$
0\ \longrightarrow\ {\rm ad}(E_{L(P)})\ \longrightarrow\ {\mathcal A}(E_{L(P)}) \
\xrightarrow{\,\,\,\delta\,\,\,}\ {\mathcal T} \ \longrightarrow \ 0
$$
(see \eqref{e11} and \eqref{e24}). Since the principal $L(P)$--bundle $E_{L(P)}$ is the extension of the structure
group of the principal $P$--bundle $E_P$ for the quotient map $P\, \longrightarrow\, L(P)$ (see \eqref{e16}),
the two exact sequences fit in the following commutative diagram:
\begin{equation}\label{e18}
\begin{matrix}
&& 0 && 0\\
&& \Big\downarrow && \Big\downarrow\\
&& {\mathcal R}(E_P) & = & {\mathcal R}(E_P)\\
&& \,\,\, \Big\downarrow\alpha' && \,\,\,\Big\downarrow\alpha\\
0 &\longrightarrow & {\rm ad}(E_P) & \longrightarrow & {\mathcal A}(E_P) &
\xrightarrow{\,\,\,\rho\,\,\,} & {\mathcal T} & \longrightarrow & 0\\
&& \,\,\, \Big\downarrow\beta &&\,\,\, \Big\downarrow\gamma && \Big\Vert\\
0 & \longrightarrow & {\rm ad}(E_{L(P)}) & \longrightarrow &
{\mathcal A}(E_{L(P)}) & \xrightarrow{\,\,\,\eta\,\,\,} & {\mathcal T} & \longrightarrow & 0\\
&& \Big\downarrow && \Big\downarrow\\
&& 0 && 0
\end{matrix}
\end{equation}
where ${\mathcal R}(E_P)\, \subset\, {\rm ad}(E_P)$ is the nilpotent radical bundle in \eqref{e15}, and
$\alpha$ is the composition of $\alpha'$ with the inclusion map ${\rm ad}(E_P)\, \hookrightarrow\,
{\mathcal A}(E_P)$. From Proposition \ref{prop1} we know that the principal $L(P)$--bundle $E_{L(P)}$
admits a holomorphic Lie algebroid connection for $\mathcal L$. Fix a holomorphic Lie algebroid connection
on $E_{L(P)}$ for $\mathcal L$; so $\delta$ is a holomorphic homomorphism
\begin{equation}\label{e19}
\delta\ :\ {\mathcal T} \ \longrightarrow\ {\mathcal A}(E_{L(P)})
\end{equation}
such that $\eta\circ\delta\,=\, {\rm Id}_{\mathcal T}$ (see Definition \ref{def1}).

Consider the subbundle ${\mathcal B}(\delta)\,:=\, \gamma^{-1}(\delta({\mathcal T}))\, \subset\, {\mathcal A}
(E_P)$, where $\gamma$ and $\delta$ are the homomorphisms in \eqref{e18} and \eqref{e19} respectively. From
\eqref{e18} we have the following short exact sequence:
\begin{equation}\label{e20}
0\ \longrightarrow\ {\mathcal R}(E_P)\ \xrightarrow{\,\,\,\alpha\,\,\, }\ {\mathcal B}(\delta)
\ \xrightarrow{\,\,\, \eta\circ\gamma\,\,\,} \ {\mathcal T} \ \longrightarrow\ 0,
\end{equation}
where $\alpha$ and $\eta$ are the homomorphisms in \eqref{e18}.

We will show the following: Any holomorphic homomorphism
$$
\theta\ :\ {\mathcal T} \ \longrightarrow\ {\mathcal B}(\delta)
$$
satisfying the condition $\eta\circ\gamma\circ\theta\,=\, {\rm Id}_{\mathcal T}$ defines a holomorphic Lie
algebroid connection on $E_P$ for $\mathcal L$. To prove this, consider the composition of homomorphisms
$$
{\mathcal T} \ \xrightarrow{\,\,\,\theta\,\,\,} \ {\mathcal B}(\delta)\ \hookrightarrow\ {\mathcal A}(E_P).
$$
Let $\theta'\, :\, {\mathcal T} \ \longrightarrow\ {\mathcal A}(E_P)$ denote this composition of
homomorphisms. Then from
the given condition that $\eta\circ\gamma\circ\theta\,=\, {\rm Id}_{\mathcal T}$, and the commutativity of
the diagram in \eqref{e18}, it follows immediately that
$\rho\circ\theta'\,=\, {\rm Id}_{\mathcal T}$. Hence $\theta'$ is a holomorphic Lie algebroid
connection on $E_P$ for $\mathcal L$ (see Definition \ref{def1}).

In other words, to prove the theorem it suffices to show that the short exact sequence in \eqref{e20}
splits holomorphically.

Let
\begin{equation}\label{e25}
\Psi\ \in\ H^1(X,\, \text{Hom}({\mathcal T},\, {\mathcal R}(E_P)))\ =\
H^1(X,\, {\mathcal R}(E_P)\otimes {\mathcal T}^*)
\end{equation}
be the cohomology class for the short exact sequence in \eqref{e20}. By Serre duality,
\begin{equation}\label{e26}
H^1(X,\, {\mathcal R}(E_P)\otimes {\mathcal T}^*)\ =\ H^0(X,\, {\mathcal R}(E_P)^*\otimes
{\mathcal T}\otimes K_X)^*\ =\ H^0(X,\, {\mathcal R}(E_P)^*\otimes {\mathcal O}_X(-{\mathbb D}))^*
\end{equation}
(see \eqref{e23}). Next, using \eqref{e15},
\begin{equation}\label{e27}
H^0(X,\, {\mathcal R}(E_P)^*\otimes {\mathcal O}_X(-{\mathbb D}))\ =\
H^0(X,\, (\text{ad}(E_G)/\text{ad}(E_P))\otimes {\mathcal O}_X(-{\mathbb D}))
\end{equation}
$$
\subset\,\ H^0(X,\, \text{ad}(E_G)/\text{ad}(E_P))
$$
because $\mathbb D$ is an effective divisor.

We have $H^0(X,\, \text{ad}(E_G)/\text{ad}(E_P))\,=\, 0$ because 
the holomorphic reduction of structure group $E_P\, \subset\, E_G$ is infinitesimally rigid
(see Definition \ref{def2}). Hence from \eqref{e26} and \eqref{e27} we conclude that
$$ 
H^1(X,\, {\mathcal R}(E_P)\otimes {\mathcal T}^*)\ =\ 0.
$$
This implies that $\Psi\,=\, 0$ (see \eqref{e25}). Consequently, the short exact sequence in \eqref{e20}
splits holomorphically. As noted before, this completes the proof.
\end{proof}

\begin{corollary}\label{cor1}
Let $E_P\,\subset\, E_G$ be the Harder--Narasimhan reduction of $E_G$. Let $(V,\,\phi)$ be a holomorphic
Lie algebroid over $X$ satisfying the condition that the anchor map $\phi$ is not surjective. Then the
holomorphic principal $P$--bundle $E_P$ admits a holomorphic Lie algebroid connection for $(V,\,\phi)$.
\end{corollary}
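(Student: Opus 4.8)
The plan is to obtain this statement as an immediate consequence of the two results already established, namely Lemma \ref{lem1} and Theorem \ref{thm1}. The strategy is simply to verify that the Harder--Narasimhan reduction satisfies both hypotheses of Theorem \ref{thm1}, and then to invoke that theorem directly. No new analytic or cohomological input is required beyond what those two results provide.

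First I would dispose of the hypothesis on the Lie algebroid $(V,\, \phi)$: the requirement that the anchor map $\phi$ be non-surjective is assumed outright in the statement of the corollary, so nothing needs to be checked there. The substantive input is the infinitesimal rigidity of the reduction. For this I would appeal to Lemma \ref{lem1}, which asserts precisely that the Harder--Narasimhan reduction $E_P\, \subset\, E_G$ is infinitesimally rigid, that is, $H^0(X,\, \text{ad}(E_G)/\text{ad}(E_P))\, =\, 0$. This rigidity is in fact the deepest ingredient of the whole circle of ideas, resting on \cite[p.~705, Corollary 1]{AAB}, but it has already been recorded and may be used as a black box here.

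With both hypotheses of Theorem \ref{thm1} in hand --- the non-surjectivity of $\phi$ and the infinitesimal rigidity of $E_P\, \subset\, E_G$ --- I would apply Theorem \ref{thm1} verbatim to conclude that the holomorphic principal $P$--bundle $E_P$ admits a holomorphic Lie algebroid connection for $(V,\, \phi)$. There is no genuine obstacle at this stage; the entire content has been absorbed into the two cited results, and the corollary is a direct specialization of Theorem \ref{thm1} to the case where the chosen reduction is the Harder--Narasimhan reduction. If one wished to make the dependence transparent, the only point worth emphasizing is that the passage from the abstract rigidity condition to the geometric class of Harder--Narasimhan reductions is exactly what Lemma \ref{lem1} supplies, so the proof amounts to chaining these two statements together.
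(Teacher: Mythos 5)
Your proposal is correct and coincides with the paper's own proof: both deduce infinitesimal rigidity of the Harder--Narasimhan reduction from Lemma \ref{lem1} and then apply Theorem \ref{thm1} directly. Nothing further is needed.
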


\begin{proof}
{}From Lemma \ref{lem1} we know that the Harder--Narasimhan reduction $E_P\, \subset\, E_G$
is infinitesimally rigid. So Theorem \ref{thm1} says that the holomorphic
principal $P$--bundle $E_P$ admits a holomorphic Lie algebroid connection for $(V,\,\phi)$.
\end{proof}

\begin{corollary}\label{cor2}
Take a point $x_0\, \in\, X$. Let $E_P\, \subset\, E_G$ be the Harder--Narasimhan reduction of $E_G$.
Then the holomorphic principal $P$--bundle $E_P$ admits a logarithmic connection which is nonsingular
over $X\setminus \{x_0\}$ (singularity allowed only over $x_0$).
\end{corollary}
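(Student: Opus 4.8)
The plan is to realize a logarithmic connection with pole only at $x_0$ as a Lie algebroid connection for a suitably chosen Lie algebroid, and then apply Corollary \ref{cor1}. First I would set
$$
V \ :=\ TX\otimes {\mathcal O}_X(-x_0)
$$
and take the anchor map $\phi\,:\, V\, \longrightarrow\, TX$ to be the natural inclusion $\iota$ of the sheaf of holomorphic vector fields vanishing at $x_0$ into the sheaf of all holomorphic vector fields. To see that $(V,\, \iota)$ is a holomorphic Lie algebroid, I would check that the sections of $TX\otimes {\mathcal O}_X(-x_0)$ form a Lie subalgebra of the sheaf of holomorphic vector fields: if two local vector fields both vanish at $x_0$, then so does their Lie bracket, so the bracket on $TX$ restricts to $V$, and the Leibniz identity \eqref{esc} holds with $\phi\,=\,\iota$. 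Crucially, the anchor $\iota$ is not surjective, since its cokernel is the skyscraper sheaf supported at $x_0$.

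Next I would invoke Corollary \ref{cor1}: because $E_P$ is the Harder--Narasimhan reduction of $E_G$, and the anchor map $\iota$ of the Lie algebroid $(V,\, \iota)$ is not surjective, the principal $P$--bundle $E_P$ admits a holomorphic Lie algebroid connection $\delta'\,:\, V\, \longrightarrow\, {\rm At}(E_P)$ for $(V,\, \iota)$, characterized by $\varpi\circ\delta'\,=\,\iota$ (see Lemma \ref{lem-1}).

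Finally I would identify this Lie algebroid connection with a logarithmic connection of the asserted type. Over the open set $X\setminus\{x_0\}$ the inclusion $\iota$ restricts to an isomorphism $V|_{X\setminus\{x_0\}}\, \xrightarrow{\sim}\, TX|_{X\setminus\{x_0\}}$, so $\delta'$ restricts there to a holomorphic splitting of the Atiyah sequence \eqref{e7}, that is, to an ordinary holomorphic connection on $E_P$ over $X\setminus\{x_0\}$. In a local holomorphic coordinate $z$ centered at $x_0$, the vector field $z\,\partial/\partial z$ generates $V$ near $x_0$, and its image under $\delta'$ produces a connection form with at most a simple pole at $x_0$; this is exactly a logarithmic connection on $E_P$ that is nonsingular away from $x_0$, completing the argument.

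I do not expect a serious obstacle here. The two points requiring care are the verification that $(TX\otimes{\mathcal O}_X(-x_0),\,\iota)$ is genuinely a Lie algebroid, which is a short computation, and the standard dictionary between Lie algebroid connections for this particular algebroid and logarithmic connections with a simple pole at $x_0$, which is precisely the viewpoint already signalled in the introduction.
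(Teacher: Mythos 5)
Your proposal is correct and follows exactly the paper's own route: the paper's proof of this corollary consists precisely of applying Corollary \ref{cor1} with $V\,=\,TX\otimes{\mathcal O}_X(-x_0)$ and $\phi$ the natural inclusion. The additional verifications you supply (that this pair is genuinely a Lie algebroid and that the resulting Lie algebroid connection is a logarithmic connection with pole only at $x_0$) are left implicit in the paper but are accurate.
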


\begin{proof}
In Corollary \ref{cor1} set $V\,=\, TX\otimes {\mathcal O}_X(-x_0)$ and set $\phi$ to
be the natural inclusion map $TX\otimes {\mathcal O}_X(-x_0)\, \hookrightarrow\, TX$.
\end{proof}

\begin{remark}\label{rem2}
For a unstable holomorphic principal $G$--bundle $E_G$ on $X$, let
$E_P\, \subset\, E_G$ be the Harder--Narasimhan reduction of $E_G$. It can be shown that $E_P$
does not admit any holomorphic connection. To see this, if $D$ is a holomorphic connection on
$E_P$, consider a nontrivial character $\chi$ of $P$ that satisfies the following two conditions:
\begin{enumerate}
\item $\chi$ is trivial on the center of $G$, and

\item $\chi$ is a nonnegative linear combination of the simple roots with respect to some Borel
subgroup contained in $P$.
\end{enumerate}
Let $L_\chi\,:=\,E_P\times^\chi {\mathbb C}$ be the holomorphic line bundle on $X$ associated to
the principal $P$--bundle $E_P$ for the character $\chi$. The degree of $L_\chi$ is nonzero
\cite[p.~695]{AAB}. But the holomorphic connection $D$ on $E_P$ induces a holomorphic connection
on the associated line bundle $L_\chi$, and the degree of any holomorphic line bundle on $X$ admitting
a holomorphic connection must be zero. In view of this contradiction we conclude that $E_P$ does
not admit any holomorphic connection.
\end{remark}

\begin{corollary}\label{cor3}
Let $E$ be a holomorphic vector bundle on $X$ of rank $r$, and let
$$
E_1\ \subset\ E_2\ \subset\ \cdots\ \subset\ E_{n-1}\ \subset\ E_n\ =\ E.
$$
be the Harder--Narasimhan filtration of $E$. Then the following two statements hold:
\begin{enumerate}
\item Take a point $x_0\, \in\, X$. The holomorphic vector bundle $E$ admits a logarithmic connection
$D$ satisfying the following two conditions:
\begin{itemize}
\item $D$ is nonsingular over $X\setminus \{x_0\}$, and

\item $D$ preserves $E_i$ for every $1\, \leq\, i\, \leq\, n$.
\end{itemize}

\item Let $(V,\, \phi)$ be a holomorphic Lie algebroid over $X$ satisfying the condition that the
anchor map $\phi$ is not surjective. Then $E$ admits a Lie algebroid connection $D$ for $(V,\, \phi)$
that preserves $E_i$ for every $1\, \leq\, i\, \leq\, n$.
\end{enumerate}
\end{corollary}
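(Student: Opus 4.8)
The plan is to translate both assertions into the language of principal bundles and then invoke Corollaries \ref{cor1} and \ref{cor2}. Set $G\,=\, \text{GL}(r,{\mathbb C})$ and let $E_G$ be the holomorphic principal $G$--bundle of frames of $E$, so that $E$ is the vector bundle associated to $E_G$ for the standard representation. The Harder--Narasimhan filtration $E_1\, \subset\, \cdots\, \subset\, E_n\,=\, E$ singles out a parabolic subgroup $P\, \subset\, G$, namely the stabilizer of the corresponding flag type (the block upper--triangular subgroup with block sizes $\text{rank}(E_i/E_{i-1})$), together with a holomorphic reduction of structure group $E_P\, \subset\, E_G$ to $P$. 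By the very construction of the Harder--Narasimhan filtration, $E_P$ is the Harder--Narasimhan reduction of $E_G$.

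First I would record the standard dictionary relating the two pictures. For $G\,=\, \text{GL}(r,{\mathbb C})$ we have $\text{ad}(E_G)\,=\, \text{End}(E)$, and a reduction $E_P\, \subset\, E_G$ to $P$ is the same datum as a filtration of $E$ by holomorphic subbundles of the prescribed ranks; under this identification $\text{ad}(E_P)\, \subset\, \text{ad}(E_G)$ is exactly the subbundle of endomorphisms preserving the filtration. Consequently the Atiyah bundle $\text{At}(E_P)\, \subset\, \text{At}(E_G)$ consists of those first order differential operators on $E$ that preserve every $E_i$. As remarked after Definition \ref{def1}, a holomorphic Lie algebroid connection on $E_G$ for $(V,\, \phi)$ is the same as a Lie algebroid connection on the vector bundle $E$; under this identification a connection whose associated homomorphism $\delta'\,:\, V\, \longrightarrow\, \text{At}(E_G)$ factors through $\text{At}(E_P)$ --- that is, a Lie algebroid connection on the reduction $E_P$ (see Lemma \ref{lem-1}) --- corresponds precisely to a Lie algebroid connection on $E$ that preserves each $E_i$.

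With this dictionary in place, part (2) is immediate. By Corollary \ref{cor1} the principal $P$--bundle $E_P$ admits a holomorphic Lie algebroid connection for $(V,\, \phi)$, given by a homomorphism $\delta'\,:\, V\, \longrightarrow\, \text{At}(E_P)$ with $\varpi\circ\delta'\,=\, \phi$. Composing with the inclusion $\text{At}(E_P)\, \hookrightarrow\, \text{At}(E_G)$ and translating back through the dictionary produces a Lie algebroid connection $D$ on $E$ for $(V,\, \phi)$ that preserves every $E_i$. For part (1) I would run the same argument with Corollary \ref{cor2} in place of Corollary \ref{cor1}; equivalently, specialize part (2) to $V\,=\, TX\otimes {\mathcal O}_X(-x_0)$ with $\phi$ the natural inclusion $TX\otimes {\mathcal O}_X(-x_0)\, \hookrightarrow\, TX$. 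A Lie algebroid connection for this particular $(V,\, \phi)$ is exactly a logarithmic connection on $E$ whose polar part is supported at $x_0$ and which is nonsingular over $X\setminus\{x_0\}$, and the filtration--preserving property carries over unchanged.

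Since the argument is essentially a change of language, there is no hard analytic estimate. The one point requiring care --- and hence the main obstacle --- is making the dictionary entry precise: one must identify $\text{At}(E_P)$ with the sheaf of first order operators on $E$ preserving the flag, and verify that the condition $\varpi\circ\delta'\,=\, \phi$ defining a Lie algebroid connection on $E_P$ translates exactly into the Leibniz rule for a filtration--preserving Lie algebroid connection on $E$. Once this identification is spelled out, both parts of the corollary follow formally from the principal bundle results already established.
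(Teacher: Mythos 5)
Your proposal is correct and follows essentially the same route as the paper: both reduce the statement to Corollaries \ref{cor1} and \ref{cor2} by taking $G = \mathrm{GL}(r,\mathbb{C})$ and $P$ the parabolic stabilizing a flag with the ranks of the Harder--Narasimhan filtration, so that $E_P \subset E_G$ is the Harder--Narasimhan reduction. The only difference is that you spell out the dictionary identifying $\mathrm{At}(E_P)$ with the filtration-preserving first order operators, which the paper leaves implicit.
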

\begin{proof}
The first statement follows from Corollary \ref{cor2} by setting $G\,=\, \text{GL}(r,{\mathbb C})$ and
$P$ to be the parabolic subgroup of $\text{GL}(r,{\mathbb C})$ that preserves a filtration of subspaces
$$
F_1\ \subset\ F_2\ \subset\ \cdots\ \subset\ F_{n-1}\ \subset\ F_n\ =\ {\mathbb C}^r
$$
for which $\dim F_i\,=\, \text{rank}(E_i)$ for every $1\, \leq\, i\, \leq\, n$.

The second statement follows from Corollary \ref{cor1} by setting $G\,=\, \text{GL}(r,{\mathbb C})$ and
$P$ to be the above parabolic subgroup of $\text{GL}(r,{\mathbb C})$.
\end{proof}

%%%%%%%%%%%%%%%%%%%%%%%%%%%%%%%%%%%%%%%%%%%%%%%%%%%%%%%%%%%%%%%%%%%%%%%%%%%%%%%%%%%%%%%%%%%%%%%%%%

\section*{Acknowledgements}

\noindent The second named author is partially supported by a J. C. Bose Fellowship (JBR/2023/000003).

%%%%%%%%%%%%%%%%%%%%%%%%%%%%%%%%%%%%%%%%%%%%%%%%%%%%%%%%%%%%%%%%%%%%%%%%%%%%%%%%%%%%%%%%%%%%%%%%%

\end{document}